\newtheorem{theorem}{Theorem}[section]
\newtheorem{lemma}[theorem]{Lemma}
\newtheorem{proposition}[theorem]{Proposition}
\newtheorem{definition}[theorem]{Definition}
\newenvironment{proof}{\paragraph{Proof:}}{\hfill$\square$\\}
\newcommand{\tib}[1]{{\color{purple} #1}}
\newcommand{\tr}{\operatorname{tr}}
\title{A coupling of the spectral measures at a vertex}
\author{Paul Rochet, Thibault Espinasse}
\begin{document}


\maketitle
\begin{abstract} 
Given the adjacency matrix of an undirected graph, we define a coupling of the spectral measures at the vertices, whose moments count the rooted closed paths in the graph. The resulting joint spectral measure verifies numerous interesting properties that allow to recover minors of analytical functions of the adjacency matrix from its generalized moments. We prove an extension of Obata's Central Limit Theorem in growing star-graphs to the multivariate case and discuss some combinatorial properties using Viennot's heaps of pieces point of view.
\end{abstract}

\textbf{Keywords:} graph; excursions; Slater determinant; cumulants; heaps of pieces

\section{Introduction}

A powerful tool to study the spectral properties of the adjacency matrix $A$ of a graph $G$ is given by the concept of spectral measure at a vertex, which encapsulates some information on the graph as seen from a particular vertex. This probability measure is fundamental in the study of the asymptotic spectrum distribution of random matrices \cite{charles1,charles2} and is related to the Benjamini-Schramm convergence of random rooted graphs \cite{BS}. In quantum mechanics, it is associated to quantum measurements with the adjacency matrix of the graph as the observable \cite{jacobs,obata2017spectral}.\\


The spectral measure at a vertex allows to define a random variable with values in the spectrum of $A$ and whose moments have simple combinatorial interpretations in term of closed paths. While it can be defined for every vertex of the graph, no multivariate extension have been proposed in the literature to our knowledge. This could be explained by the current lack of interpretation of a joint distribution in quantum mechanics, where Positive Operator Valued Measures \cite{povm} prevail as the most natural way to deal with the spectral measures simultaneously. In this paper, we address the question of looking for a natural coupling of these random variables. We introduce a joint spectral distribution in the form of a quasi-probability, that is, a signed measure with total mass $1$. This results in a quasi-random permutation $X$ of the spectrum of the adjacency matrix $A$ which verifies numerous interesting properties. In particular, the minors of $A^k, k \in \mathbb N$ can be expressed as generalized moments of $X$. Moreover, if the graph has no self-loop, the covariance matrix under the joint spectral measure is the Laplacian of the graph. We show this formalism to be compatible with the interpretation of the wave function of a multi-fermionic system as Slater determinants \cite{slater}, which appear naturally when calculating multivariate marginal distributions of the joint spectral measure, up to permutations. \\

Limit theorems on the spectral measures have been investigated for several types of growing graphs such as star graphs \cite{obatatcl} or comb graphs \cite{obata}. We extend Obata's result on star graphs to the multivariate case by considering the joint spectral measure in a graph obtained by merging a subset of vertices in $n$ copies of a graph, and letting $n$ tend to infinity. We show that, as in the univariate case, the limit only depends on the immediate neighborhood of the subset of vertices in the remaining part of the graph.\\

We explore some combinatorial properties that emerge from the joint distribution of a subset of the quasi-random variables in term of paths on the graph. Precisely, we show that the moment generating function of the quasi-random variables enumerates particular objects which can be described in terms of excursions on the graph, using Viennot's heaps of pieces point of view \cite{viennot1989heaps}.\\

%
%

%
%
The paper is organized as follows. In the first section, we introduce the joint distribution and derive some properties related to its generalized moments. We prove the multivariate extension of Obata's Central Limit Theorem on star graphs in Section \ref{sec:3}. Then, we discuss a combinatorial aspect in relation with heaps of cycles and excursions on the graph. Finally, technical proofs of some of the results are presented in the Appendix.

\section{The joint spectral measure}\label{sec:2}
Let $A =\big( a_{ij} \big)_{i,j=1,...,N}$ be a symmetric matrix with real coefficients. The local spectral measure of $A$ at $i \in \{1,...,N\}$ is defined as the unique real measure $\mu_i$ with moments 
$$ \int_\mathbb R x^k d \mu_i(x) = \big(A^k\big)_{ii} \ , \ k=0,1,2,... $$
Recall that, because $A$ is symmetric, its eigenvalues $\lambda_1,...,\lambda_N$ are real and there exists an eigendecomposition $A = P \Lambda P^\top$ with $\Lambda= \operatorname{Diag}(\lambda_1,...,\lambda_n)$ a diagonal matrix and $P = (p_{ij})_{i,j=1,...,n}$ an orthogonal matrix. Without loss of generality, we will always assume $\det(P)=1$. One verifies easily that $\mu_i$ is the probability measure
$$ \mu_i = \sum_{k=1}^n p_{ik}^2 \delta_{\lambda_k},  $$
where $\delta_{\lambda_k}$ is the Dirac measure at $\lambda_k$. This measure has a combinatorial interpretation when $A$ is the adjacency matrix of a graph, in which case the $k$-th moment of $\mu_i$ counts the number \tib{of} walks of length $k$ from $i$ to itself on the graph. In this context, $\mu_i$ is called the spectral measure at root (or vertex) $i$. This can be generalized to any symmetric matrix viewing $A$ as the weighted adjacency matrix of a graph (possibly the complete graph). Then, the $k$-th moment of $\mu_i$ is equal to the sum of the weights of all walks of length $k$ from $i$ to itself, where the weight of a walk is the product of the entries of $A$ over the edges that compose it.\\

In this paper, we define a joint spectral measure on $\mathbb R^N$ whose marginal distributions are the univariate spectral measures $\mu_1,...,\mu_N$. Let $S_N$ be the set of permutations on $\{1,...,N \}$ and $\pi$ the signed measure on $S_N$ with density (with respect to the counting measure)
$$ \pi (\sigma) =  \epsilon(\sigma) \prod_{j=1}^N p_{j \sigma(j)} \ , \ \sigma \in S_N \ , $$ 
where $\epsilon(.)$ denotes the signature. Alternatively, $\pi (\sigma) = \det( P \odot M_{\sigma} )$ where $M_\sigma$ is the permutation matrix associated to $\sigma$ and $\odot$ is the Hadamard product. Remark that $\pi$ is a quasi-probability distribution, i.e.~a signed-measure with total mass $1$, in view of 
$$\sum_{\sigma \in S_N} \pi(\sigma) = \sum_{\sigma \in S_N} \epsilon(\sigma) \prod_{j=1}^N p_{j \sigma(j)}  = \det(P) = 1.  $$
Throughout the paper, $\lambda = (\lambda_1,...,\lambda_N)$ designates the vector of eigenvalues of $A$ (which may contain multiple occurrences). For $\sigma \in S_N$, we define $\lambda_\sigma := \big( \lambda_{\sigma(1)},..., \lambda_{\sigma(N)} \big)$ the associated vector of permuted eigenvalues. 

\begin{definition} The joint spectral measure of $A$ is the push-forward measure of $\pi$ by the application $\sigma \mapsto \lambda_\sigma$. In particular, $\mu$ has support $\{ \lambda_\sigma : \sigma \in S_N \}$ and density
$$ \mu \big(\lambda_\sigma \big) = \sum_{\tau: \lambda_{\tau} = \lambda_{\sigma}} \epsilon(\tau) \prod_{j=1}^N p_{j \tau(j)} \ , \ \sigma \in S_N .  $$
\end{definition}

If all eigenvalues $\lambda_i$ are distinct, then simply $\mu \big(\lambda_\sigma \big) = \pi (\sigma) = \epsilon(\sigma) \prod_{j=1}^N p_{j \sigma(j)}$. The definition is more general to account for multiple eigenvalues. \\

Although the definition involves the transformation matrix $P$, the joint spectral measure $\mu$ actually does not depend on the choice of the eigendecomposition. Precisely, if $A$ has some multiple eigenvalues and $A = P\Lambda P^\top = Q \Lambda Q^\top$ are two eigendecompositions with $\det(P) = \det(Q) = 1$, then 
$$ \forall \sigma \in S_N \ , \ \sum_{\tau: \lambda_{\tau} = \lambda_{\sigma}} \epsilon(\tau) \prod_{j=1}^N p_{j \tau(j)} = \sum_{\tau: \lambda_{\tau} = \lambda_{\sigma}} \epsilon(\tau) \prod_{j=1}^N q_{j \tau(j)}.  $$
For a direct combinatorial proof of this statement see Section \ref{a:basis} in the Appendix. Alternatively, it suffices to show that the generalized moments: 
$$ \forall k_1,...,k_N \in \mathbb N \ , \ m[k_1,...,k_N] := \int_{\mathbb R^N} x_1^{k_1}...x_N^{k_N} \ d\mu(x_1,...,x_N)$$
do not depend on the eigendecomposition ($\mu$ is characterized by its generalized moments as a signed measure on $\mathbb R^N$ with finite support). In fact, the generalized moments $m[k_1,...,k_N]$ have a rather simple expression, as we show in Lemma \ref{lem:gen_mom} below. Let $A[k_1,...,k_N]$ denote the matrix whose $i$-th column is the $i$-th column of $A^{k_i}$ for $i=1,...,N$ (with the convention $A^0 = I$, the identity matrix).

\begin{lemma}\label{lem:gen_mom} For all $k_1,...,k_N \geq 0$, $ m[k_1,...,k_N] =\det\big(A[k_1,...,k_N] \big)$.
\end{lemma}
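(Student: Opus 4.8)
The plan is to evaluate both sides explicitly and match them term by term as sums over $S_N$. First I would use that $\mu$ is, by definition, the push-forward of the signed measure $\pi$ under the map $\sigma \mapsto \lambda_\sigma$. The push-forward identity $\int f\, d\mu = \sum_{\sigma \in S_N} \pi(\sigma)\, f(\lambda_\sigma)$ holds for any $f$ and any multiplicity pattern of the eigenvalues (grouping the $\sigma$ by the common value $\lambda_\sigma$ recovers the density $\mu(\lambda_\sigma)$). Taking $f(x) = x_1^{k_1}\cdots x_N^{k_N}$, substituting $\pi(\sigma) = \epsilon(\sigma)\prod_j p_{j\sigma(j)}$, and regrouping the two products over the common index $i$ gives
$$ m[k_1,\dots,k_N] = \sum_{\sigma \in S_N} \pi(\sigma) \prod_{i=1}^N \lambda_{\sigma(i)}^{k_i} = \sum_{\sigma \in S_N} \epsilon(\sigma) \prod_{i=1}^N p_{i\sigma(i)}\, \lambda_{\sigma(i)}^{k_i}. $$
Since this expression is manifestly independent of the eigendecomposition only after we identify it with $\det(A[k_1,\dots,k_N])$, this same computation is what underlies the independence claim stated earlier.

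The second and main step is to show $\det(A[k_1,\dots,k_N])$ equals the same permutation sum. The clean route is a factorization. Using orthogonality, $A^{k_i} = P\Lambda^{k_i}P^\top$, so the $i$-th column of $A[k_1,\dots,k_N]$ is $A^{k_i}e_i = P\,\Lambda^{k_i}P^\top e_i = P v_i$ with $v_i := \Lambda^{k_i}P^\top e_i$. Hence $A[k_1,\dots,k_N] = P\,[\,v_1\,|\cdots|\,v_N\,]$, and because $\det(P)=1$,
$$ \det\big(A[k_1,\dots,k_N]\big) = \det[\,v_1\,|\cdots|\,v_N\,]. $$
The $(m,i)$-entry of $[\,v_1\,|\cdots|\,v_N\,]$ is $(\Lambda^{k_i}P^\top e_i)_m = \lambda_m^{k_i}\,p_{im}$, so the Leibniz formula yields $\sum_{\sigma}\epsilon(\sigma)\prod_i \lambda_{\sigma(i)}^{k_i}\,p_{i\sigma(i)}$, which is exactly the sum obtained for $m[k_1,\dots,k_N]$ above. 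This closes the argument.

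I expect the only real subtlety to be bookkeeping: keeping straight that the exponent $k_i$ is attached to the column index $i$ (not the row), and reading the entries of $v_i$ correctly so that the Leibniz expansion lands on the identical permutation sum. If one prefers to avoid the factorization, the alternative is to expand $\det(A[k_1,\dots,k_N])$ directly from $(A^{k_i})_{\sigma(i),i} = \sum_m p_{\sigma(i),m}\lambda_m^{k_i}p_{im}$, interchange the sum over $\sigma$ with the sum over internal indices $(m_1,\dots,m_N)$, and recognize the inner sum $\sum_\sigma \epsilon(\sigma)\prod_i p_{\sigma(i),m_i}$ as the minor of $P$ on columns $(m_1,\dots,m_N)$; by $\det(P)=1$ this vanishes unless the $m_i$ form a permutation $\rho$, where it equals $\epsilon(\rho)$, recovering the same sum. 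I would present the factorization argument, since it makes the role of $\det(P)=1$ transparent and sidesteps the combinatorial case analysis.
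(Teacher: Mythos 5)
Your proof is correct and follows essentially the same route as the paper: both reduce $m[k_1,\dots,k_N]$ to the permutation sum $\sum_{\sigma}\epsilon(\sigma)\prod_i p_{i\sigma(i)}\lambda_{\sigma(i)}^{k_i}$ via the definition of $\pi$, then use $A^{k}P = P\Lambda^{k}$, the Leibniz formula, and $\det(P)=1$ to identify this sum with $\det(A[k_1,\dots,k_N])$. The only cosmetic difference is that you factor $P$ out on the left of $A[k_1,\dots,k_N]$ while the paper multiplies by $P$ on the right (and your explicit handling of the push-forward in the presence of repeated eigenvalues is slightly more careful than the paper's), but the substance is identical.
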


\begin{proof} By definition, 
$$  m[k_1,...,k_N]  = \sum_{\sigma \in S_N} \lambda_{\sigma(1)}^{k_1}...\lambda_{\sigma(N)}^{k_N} \mu \big(\lambda_\sigma \big) = \sum_{\sigma \in S_N} \epsilon(\sigma) \prod_{i = 1}^N p_{i\sigma(i)} \lambda_{\sigma(i)}^{k_i}. $$
Since $A^k P = P \Lambda^k$ for all $k=0,1,...$, it follows that $ \big( A^k P \big)_{ij} = p_{ij} \lambda_j^k$ for all $i,j =1,...,N$ and $k\geq 0. $
Hence,
$$ m[k_1,...,k_N]  = \sum_{\sigma \in S_N} \epsilon(\sigma) \prod_{i = 1}^N \big( A^{k_i} P \big)_{i \sigma(i)}
=\det \big( A[k_1,...,k_N] P\big) = \det \big(A[k_1,...,k_N] \big). \vspace{-1.25cm}$$
\end{proof}
\vspace{0.3cm}

For ease of readability, we shall still use the standard notations for probability measures such as the probability $\mathbb P(.)$ of an event or the expectation $\mathbb E(.)$ defined similarly by integrating against $\mu$. To this aim, we introduce a quasi-random vector $X=(X_1,...,X_n)$ with distribution $\mu$, denoting e.g. $\mathbb E \big( f(X) \big) = \int_{\mathbb R^N} f(x) d \mu(x)$ for any integrable function $f : \mathbb R^N \to \mathbb R$. Quasi-random variables are common in quantum probability where they have a physical interpretation, typically for Wigner phase-space representations \cite{quasi}. 

%
%
%



\begin{proposition} The marginal distributions of $\mu$ are the rooted spectral measures $\mu_1,...,\mu_N$. In particular, if all eigenvalues $\lambda_1,...,\lambda_N$ are distinct,
$$ \mathbb P(X_i = \lambda_k) = p_{ik}^2 \ , \ \forall i,k=1,...,N.  $$
\end{proposition}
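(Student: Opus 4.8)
The plan is to identify the one-dimensional marginal of $X_i$ through its moments and then invoke Lemma \ref{lem:gen_mom}. The $k$-th moment of the $i$-th marginal is $\mathbb E(X_i^k) = m[0,\dots,0,k,0,\dots,0]$, with the $k$ sitting in the $i$-th slot, so by Lemma \ref{lem:gen_mom} it equals $\det\big(A[0,\dots,0,k,0,\dots,0]\big)$. First I would observe that in this matrix every column $j \neq i$ is the $j$-th column of $A^0 = I$, namely the standard basis vector $e_j$, while the $i$-th column is the $i$-th column of $A^k$. Thus the matrix is the identity with its $i$-th column replaced by the $i$-th column of $A^k$.

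Next I would evaluate this determinant. Expanding along the $i$-th row, which has a single nonzero entry $(A^k)_{ii}$ in position $i$ (every basis-vector column vanishes on row $i$), the surviving minor is the $(N-1)\times(N-1)$ identity, so $\det\big(A[0,\dots,0,k,0,\dots,0]\big) = (A^k)_{ii}$; equivalently this is Cramer's rule. By the defining property of the local spectral measure, $(A^k)_{ii}$ is exactly the $k$-th moment of $\mu_i$. Hence the $i$-th marginal of $\mu$ and $\mu_i$ share every moment.

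To conclude that the two measures coincide, I would use that both are carried by the finite set $\{\lambda_1,\dots,\lambda_N\}$ of eigenvalues: a signed measure supported on finitely many prescribed points is determined by finitely many of its moments through an invertible Vandermonde system, so equality of all moments forces equality of the measures. This gives the general statement, and the distinct-eigenvalue formula is then immediate from $\mu_i = \sum_k p_{ik}^2 \, \delta_{\lambda_k}$. The only point requiring care is the bookkeeping with repeated eigenvalues, and I expect the (mild) main obstacle to be this uniqueness-from-moments step; phrasing it via the Vandermonde argument on the common finite support is cleaner than arguing density-by-density. As an alternative route for the distinct case, one can compute directly $\mathbb P(X_i = \lambda_k) = \sum_{\sigma:\sigma(i)=k} \epsilon(\sigma)\prod_{j} p_{j\sigma(j)} = p_{ik}\, C_{ik}$, where $C_{ik}$ is the $(i,k)$ cofactor of $P$; since $P$ is orthogonal with $\det(P)=1$ its adjugate is $P^\top$, whence $C_{ik} = p_{ik}$ and the product equals $p_{ik}^2$.
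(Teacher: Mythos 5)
Your argument is correct and follows the same route as the paper: identify the moments of the $i$-th marginal via Lemma \ref{lem:gen_mom}, observe that $\det\big(A[0,\dots,0,k,0,\dots,0]\big)=(A^k)_{ii}$, and conclude that the marginal is the local spectral measure $\mu_i$. The additional details you supply (the Cramer/cofactor evaluation of the determinant, the Vandermonde uniqueness-from-moments step, and the direct adjugate computation of $\mathbb P(X_i=\lambda_k)$) are all sound and merely make explicit what the paper leaves implicit.
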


\begin{proof} The $i$-th marginal distribution of $\mu$ is characterized by the moments $\mathbb E(X_i^k) = m[0,...,0,k,0,...,0]$ for $k=1,2,...$ placed at position $i$. By Lemma \ref{lem:gen_mom}, 
$$ \mathbb E \big(X_i^k \big) = \det \big(A[0,...,0,k,0,...,0] \big) = (A^k)_{ii}, $$
ending the proof.
\end{proof}

\noindent The explicit formula of the generalized moments provides a powerful tool to prove some interesting properties of the joint spectral measure. For instance, expressing the $(i,j)$-minor of $A^k$ in function of the moments of $X_i,X_j$, we obtain
$$ \operatorname{cov}\big(X_i^k,X_j^k\big) := \mathbb E \big( X_i^k X_j^k) - \mathbb E(X_i^k) \mathbb E(X_j^k) = - \big(A^k\big)_{ij}^2 \ , \ i \neq j \ , \ k \in \mathbb N. $$ 
This shows in particular that $X_i^k$ and $X_j^k$ are always non-positively correlated for $i \neq j$. When $A$ is the adjacency matrix of an undirected graph $G$ with $a_{ij} \in \{ 0,1\}$ for all $i,j=1,...,N$, $-\operatorname{cov}\big(X_i^k,X_j^k\big)$ gives the squared number of walks of length $k$ from $i$ to $j$. If we assume moreover that the graph contains no self-loop (i.e.~$A$ has zero diagonal), we can verify by direct calculation that the covariance matrix of $\mu$ is the Laplacian of the graph:
$$ \operatorname{var}(X) = \mathbb E \big( X  X^\top) - \mathbb E(X) \mathbb E(X)^\top = L,  $$
where $L_{ij} = - a_{ij} $ for $i \neq j$ and $L_{ii}$ is the degree of the vertex $i$ in $G$. \\

Another straightforward consequence of Lemma \ref{lem:gen_mom} concerns the minors of $A$, which are linked to the quasi-random variables $X_i$ by the identity $ \det \big( A_{uu} \big) = \mathbb E \big( \ \prod_{i \in u} X_i \big)$, where $u$ is a subset of $\{ 1,...,N \}$ and $A_{uu}$ the corresponding submatrix. The cycle decomposition of the determinant shows a relation between the cumulants of $X_i, i \in u$ and the simple cycles with vertex set $u$. Consider the set $\mathcal P(u)$ of all the partitions of $u$, that is, all the collections of subsets $\{\pi_1,...,\pi_k\}$ with $k \geq 1$ such that $\pi_i \cap \pi_j = \emptyset$ for all $i \neq j$ and $\cup_{j=1}^k \pi_j = u$. Let $c(u)$ denote the sum of all simple cycles on $G$ with vertex set $u$, we have the well-known equality
$$ \det \big( - A_{uu} \big) = \sum_{(\pi_1,...,\pi_k) \in \mathcal P(u)} (-1)^k c(\pi_1) \times  ... \times  c(\pi_k).  $$
On the other hand, the multivariate first-order cumulants $\kappa(\pi_j)$ associated to the variables $X_i, i \in \pi_j$ are defined in such a way that
$$ \mathbb E \Big( \ \prod_{i \in u} X_i \Big) = \sum_{(\pi_1,...,\pi_k) \in \mathcal P(u)} \kappa(\pi_1) \times ... \times \kappa(\pi_k). $$
It follows that $\kappa(u) = (-1)^{|u|-1} c(u)$ where $|u|$ is the size of $u$. \\

The identity equating a minor of $A$ to a generalized moment can be extended to any power $A^k, k \in \mathbb N$, e.g.~$ \det \big( (A^k)_{uu} \big) = \mathbb E \big( \prod_{i \in u} X_i^k \big)$, as another direct consequence of Lemma \ref{lem:gen_mom}. In fact, a similar result holds for any analytical transformations $f(A) = \sum_{k\geq 0} \gamma_k A^k$.

\begin{proposition}\label{prop:gen_mom_fun} Let $f: x \mapsto \sum_{k\geq 0} \gamma_k x^k$ be an analytical function with spectral radius $\rho > \max \{ | \lambda_j | ,j =1,...,N \}$ and $u \subseteq \{1,...,N\}$, 
$$\det\big( f(A)_{uu} \big) = \mathbb{E}\bigg( \prod_{i \in u} f(X_i) \bigg).$$
\end{proposition}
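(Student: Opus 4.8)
The plan is to reduce the identity to Lemma~\ref{lem:gen_mom} by expanding both sides as power series in $A$ and matching them term by term. The spectral radius hypothesis $\rho > \max\{|\lambda_j| : j=1,\dots,N\}$ guarantees that $\sum_{k\geq 0} |\gamma_k|\,|\lambda_j|^k < \infty$ for every eigenvalue; hence $f(A)=\sum_{k\geq 0}\gamma_k A^k$ converges entrywise and absolutely, and $f(X_i)$ is well defined since $X_i$ takes values among the $\lambda_j$. This absolute convergence is what will license the interchanges of summation carried out below.

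First I would treat the right-hand side. Writing $f(X_i)=\sum_{k_i\geq 0}\gamma_{k_i}X_i^{k_i}$ and using linearity of the expectation against the finitely supported signed measure $\mu$, one obtains
$$ \mathbb{E}\Big(\prod_{i\in u} f(X_i)\Big) = \sum_{(k_i)_{i\in u}} \Big(\prod_{i\in u}\gamma_{k_i}\Big)\, \mathbb{E}\Big(\prod_{i\in u} X_i^{k_i}\Big). $$
Each term $\mathbb{E}\big(\prod_{i\in u}X_i^{k_i}\big)$ is the generalized moment $m[k_1,\dots,k_N]$ in which $k_j=0$ for $j\notin u$. By Lemma~\ref{lem:gen_mom} it equals $\det\big(A[k_1,\dots,k_N]\big)$, a matrix whose columns indexed by $j\notin u$ are the standard basis vectors $e_j$ (the columns of $A^0=I$). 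Expanding that determinant over those columns via the permutation formula forces any contributing permutation to fix every $j\notin u$, and the surviving terms reproduce exactly the minor $\det\big(\big[(A^{k_i})_{ri}\big]_{r,i\in u}\big)$ with sign $+1$.

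Next I would expand the left-hand side by multilinearity of the determinant in its columns: the $i$-th column of $f(A)_{uu}$ is $\sum_{k_i\geq 0}\gamma_{k_i}\big((A^{k_i})_{ri}\big)_{r\in u}$, and distributing the absolutely convergent sums across all columns yields
$$ \det\big(f(A)_{uu}\big) = \sum_{(k_i)_{i\in u}} \Big(\prod_{i\in u}\gamma_{k_i}\Big)\, \det\big(\big[(A^{k_i})_{ri}\big]_{r,i\in u}\big). $$
This matches the right-hand side term by term, which closes the argument. In particular the special case $f(x)=x^k$ recovers the previously stated identity $\det\big((A^k)_{uu}\big)=\mathbb{E}\big(\prod_{i\in u}X_i^k\big)$.

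The only genuinely delicate point is the convergence bookkeeping, since both determinant expansions and the expectation involve an infinite sum in every entry, and interchanging these with the finite determinant and expectation operations must be justified; I expect this to be routine given the absolute convergence above. A virtue of this route is that it rests on Lemma~\ref{lem:gen_mom} rather than on a particular eigendecomposition, so it is insensitive to eigenvalue multiplicities. As an independent check one may instead use $f(A)_{uu}=P_u f(\Lambda)P_u^\top$, with $P_u$ the rows of $P$ indexed by $u$, and apply the Cauchy--Binet formula to get $\det\big(f(A)_{uu}\big)=\sum_{|w|=|u|}\big(\prod_{k\in w} f(\lambda_k)\big)\det\big((P_u)_{.,w}\big)^2$, exhibiting the same quantity directly in terms of the spectrum.
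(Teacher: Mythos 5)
Your proposal is correct and follows essentially the same route as the paper: expand via multilinearity of the determinant in the columns of $f(A)_{uu}$, identify each coefficient with a generalized moment through Lemma~\ref{lem:gen_mom}, and resum into $\mathbb{E}\big(\prod_{i\in u} f(X_i)\big)$. Your extra remarks --- the explicit reduction of $\det\big(A[k_1,\dots,k_N]\big)$ to the minor over $u$ when $k_j=0$ for $j\notin u$, and the Cauchy--Binet cross-check --- are sound but do not change the argument.
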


\begin{proof} Assume without loss of generality that $u=\{1,...,p\}$ with $1 \leq p \leq N$. By multilinearity of the determinant
$$ \det\big( f(A)_{uu} \big) = \det \bigg( \sum_{k\geq 0} \gamma_k (A^k)_{uu} \bigg) = \sum_{k_1,...,k_p\geq 0} \gamma_{k_1}...\gamma_{k_p} \det \big( A[k_1,...,k_p,0,...,0] \big). $$
Using Lemma \ref{lem:gen_mom}, we get 
$$ \det\big( f(A)_{uu} \big) =  \mathbb E \bigg( \sum_{k_1,...,k_p \geq 0}  \gamma_{k_1}...\gamma_{k_p}X_1^{k_1} ... X_p^{k_p}  \bigg) = \mathbb{E}\bigg( \prod_{i \in u} f(X_i) \bigg),$$
ending the proof.
\end{proof}

Interestingly, the same equality holds for the trace $\tr \big( f(A)_{uu} \big) = \mathbb E \big( \sum_{i \in u} f(X_i) \big)$, although it only pertains to the marginal distributions $\mu_i$. This particular coupling somewhat allows to obtain a similar property for the determinant. This shows moreover that the quasi-random variables $X_i, i \in u$ encapsulate the information on the spectrum of the submatrix $f(A)_{uu}$, for any analytical transformation of $A$. Specifically, the eigenvalues of $f(A)_{uu}$ are the roots of the characteristic polynomial 
$$ z \mapsto \mathbb{E}\bigg( \prod_{i \in u} \big( z - f(X_i) \big) \bigg) = \det \big( z I - f(A)_{uu} \big). $$

\begin{proposition}\label{prop:slater} Assume that the eigenvalues $\lambda_1,...,\lambda_N$ of $A$ are distinct. Let $u ,v$ be two subsets of $\{1,...,N \}$,
$$ \mathbb P \Big( \{X_{i} : i \in u \} = \{ \lambda_{j}: j \in v \} \Big) = \det \big( P_{uv} \big)^2,    $$
where $P_{uv}$ is the submatrix of $P$ with rows in $u$ and columns in $v$ (in any particular order).
\end{proposition}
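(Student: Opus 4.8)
The plan is to reduce the probability to a sum over permutations, then evaluate that sum via a term-by-term Laplace expansion of $\det(P)$ and Jacobi's complementary-minor identity, with the orthogonality of $P$ supplying the final simplification.

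First I would observe that, because the eigenvalues are distinct, $X = \lambda_\sigma$ with quasi-probability $\pi(\sigma) = \epsilon(\sigma)\prod_j p_{j\sigma(j)}$, and the coordinates $X_i = \lambda_{\sigma(i)}$ take distinct values for distinct indices $i$. Hence the set $\{X_i : i \in u\}$ has exactly $|u|$ elements, and the event $\{X_i : i \in u\} = \{\lambda_j : j \in v\}$ occurs precisely when $\sigma(u) = v$; in particular this forces $|u| = |v| =: p$, so that $P_{uv}$ is square. Summing the densities over this set of permutations gives
$$ \mathbb P\Big(\{X_i : i \in u\} = \{\lambda_j : j \in v\}\Big) = \sum_{\sigma \in S_N : \, \sigma(u) = v} \epsilon(\sigma)\prod_{j=1}^N p_{j\sigma(j)}. $$

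Next I would identify this sum with a single block of the Laplace expansion of $\det(P)$ along the rows indexed by $u$. Grouping the full permutation expansion $\det(P) = \sum_\sigma \epsilon(\sigma)\prod_j p_{j\sigma(j)}$ according to the image $\sigma(u)$, the contribution of the permutations with $\sigma(u) = v$ is exactly $(-1)^{s(u,v)}\det(P_{uv})\det(P_{u^c v^c})$, where $s(u,v) = \sum_{i\in u} i + \sum_{j \in v} j$ and $u^c,v^c$ denote the complements. Thus the probability equals $(-1)^{s(u,v)}\det(P_{uv})\det(P_{u^c v^c})$.

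The remaining step, and the one requiring care, is to eliminate the complementary minor. Here I would invoke Jacobi's identity for complementary minors: for invertible $P$, $\det(P_{u^c v^c}) = (-1)^{s(u,v)}\det(P)\,\det\big((P^{-1})_{vu}\big)$. Since $P$ is orthogonal, $P^{-1} = P^\top$, so $(P^{-1})_{vu} = (P_{uv})^\top$ and therefore $\det\big((P^{-1})_{vu}\big) = \det(P_{uv})$; combined with $\det(P) = 1$ this yields $\det(P_{u^c v^c}) = (-1)^{s(u,v)}\det(P_{uv})$. Substituting back, the two sign factors multiply to $(-1)^{2 s(u,v)} = 1$ and the probability collapses to $\det(P_{uv})^2$, as claimed. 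The main obstacle is purely the sign bookkeeping — ensuring that the Laplace-expansion sign and Jacobi's sign carry the same exponent $s(u,v)$ so that they cancel. This cancellation doubles as a sanity check, since it shows the marginal mass is a genuine non-negative probability even though $\mu$ is only a quasi-probability.
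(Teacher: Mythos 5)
Your proposal is correct and follows essentially the same route as the paper: reduce the event to the permutations with $\sigma(u)=v$, factor the resulting sum into the product of complementary minors $\det(P_{uv})\det(P_{u^c v^c})$, and eliminate the complementary minor via Jacobi's identity together with the orthogonality of $P$. The only difference is presentational: the paper first relabels the eigenvalues to assume $u=v$, which makes all the Laplace/Jacobi signs trivially $+1$, whereas you keep general $u,v$ and verify explicitly that the two $(-1)^{s(u,v)}$ factors cancel.
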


\begin{proof} Since the ordering of the eigenvalues $\lambda_j$ is arbitrary, we may assume $u=v$ without loss of generality (note that the condition $\det(P)=1$ is not an issue here as it can be ensured without changing the order of the eigenvalues). We have
$$ \mathbb P \Big( \{X_{i} : i \in u \} =  \{ \lambda_{j}: j \in v \} \Big)= \sum_{\sigma: \sigma(u) = u} \epsilon(\sigma) \prod_{i=1}^N p_{i \sigma(i)}, $$
where the sum runs over all permutations $\sigma$ that stabilize $u$. Such permutations can be associated with the pair $(\sigma_u,\sigma_{\overline u})$ of permutations over $u$ and $\overline u = \{1,...,N \} \setminus u$ respectively, corresponding to the restrictions of $\sigma$ to $u$ and $\overline u$. Noticing that $\epsilon(\sigma) = \epsilon(\sigma_u) \epsilon(\sigma_{\overline u})$, we get
$$ \mathbb P \Big( \{X_{i} : i \in u \} =  \{ \lambda_{j}: j \in v \} \Big)= \sum_{\sigma_u \in S(u)} \epsilon(\sigma_u) \prod_{i \in u}^N p_{i \sigma_u(i)} \times \sum_{\sigma_{\overline u} \in S(\overline u)} \epsilon(\sigma_{\overline u}) \prod_{i \in \overline u}^N p_{i \sigma_{\overline u}(i)}, $$
where $S(u)$ and $S(\overline u)$ are the sets of permutations over $u$ and $\overline u$. Thus, 
$$ \mathbb P \Big( \{X_{i}: i \in u \} =  \{ \lambda_{j}: j \in u \} \Big) = \det\big(P_{uu}\big) \det\big(P_{\overline u \overline u} \big).  $$
We conclude by Jacobi's Identity (see e.g.~Equation $(11)$ in \cite{jacobi}): $\det\big(P_{\overline u \overline u} \big) = \det\big(P_{u u} \big) / \det (P)$.
\end{proof}

This last result can also be shown directly, together with a stronger property giving the multidimensional marginal distributions of $\mu$, see Section \ref{a:margin} in the Appendix.\\

In quantum mechanics, the last expression corresponds (up to a normalization constant) to the square of the wave function $\Psi_u(\lambda_v) := \det (P_{uv})$ known as Slater determinant \cite{slater}. In this context, the coupling $\mu$  provides a non-local interpretation of the quantum wave-function from a quasi-probability, where the square emerges naturally from marginal distributions.

\section{A multivariate central limit theorem for star graphs}\label{sec:3}

In this section, we generalize a result of Obata \cite{obatatcl} on the asymptotic behavior of the spectral measure in star graphs. Let us first recall the original result: let $G$ be a rooted graph (that is a graph given with a special vertex $o$ called the root) with adjacency matrix $A$, and $G^{(n)}$ the star product defined by taking $n$ copies of $G$ and merging all the roots. Let $\mu_o^{(n)}$ be the spectral measure of $G^{(n)}$ at the root $o$ and $d_o$ the degree of $o$ in $G$.

\begin{theorem}[Theorem 3.7 \cite{obatatcl}]\label{th:1} The (normalized) spectral measure $\mu_o^{(n)}$ converges weakly as $n \to \infty$ with
$$ \frac 1 {\sqrt n} \mu^{(n)}_o \Big( \frac{.}{\sqrt n} \Big) \underset{n \to \infty}{ -\!\!\!\rightharpoonup} \frac 1 2 \big( \delta_{-\sqrt{d_o}} + \delta_{\sqrt{d_o}} \big).$$
\end{theorem}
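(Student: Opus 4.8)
The plan is to use the method of moments. Write $A^{(n)}$ for the adjacency matrix of $G^{(n)}$ and recall that the $k$-th moment of the spectral measure $\mu_o^{(n)}$ equals $\int x^k\, d\mu_o^{(n)}(x) = \big((A^{(n)})^k\big)_{oo}$, the number of closed walks of length $k$ rooted at $o$ in $G^{(n)}$. Consequently the $k$-th moment of the rescaled measure in the statement is $n^{-k/2}\big((A^{(n)})^k\big)_{oo}$. Since the target law $\tfrac12(\delta_{-\sqrt{d_o}}+\delta_{\sqrt{d_o}})$ has $k$-th moment $d_o^{k/2}$ for $k$ even and $0$ for $k$ odd, and is compactly supported (hence determined by its moments), it suffices to prove
$$ n^{-k/2}\big((A^{(n)})^k\big)_{oo} \xrightarrow[n\to\infty]{} \begin{cases} d_o^{m} & \text{if } k=2m,\\ 0 & \text{if } k \text{ odd},\end{cases}$$
and then invoke the Fr\'echet--Shohat theorem to upgrade convergence of moments to weak convergence.

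The heart of the argument is a combinatorial decomposition of closed walks in the star product. Because the $n$ copies of $G$ share only the root $o$, any closed walk rooted at $o$ splits uniquely, at each of its returns to $o$, into a sequence of \emph{excursions}, each excursion being a sub-walk that leaves $o$, stays inside a single copy, and returns to $o$ without visiting $o$ in between. An excursion is specified by (i) a choice of one of the $n$ copies and (ii) an excursion shape, i.e.\ a first-return walk at $o$ in $G$; crucially the copy is selected freely and independently for each excursion. Writing $f_\ell$ for the number of excursion shapes of length $\ell$ in $G$ and $F(z)=\sum_{\ell\ge 2} f_\ell z^\ell$ for their generating function, the first-return generating function of $G^{(n)}$ is exactly $nF(z)$, so the return generating function factorizes as the formal power series identity
$$ \sum_{k\ge 0} \big((A^{(n)})^k\big)_{oo}\, z^k = \frac{1}{1-nF(z)} = \sum_{j\ge 0} n^{j} F(z)^{j}. $$
Extracting the coefficient of $z^k$ shows that $\big((A^{(n)})^k\big)_{oo} = \sum_{j\ge 0} n^{j}\,[z^k]F(z)^j$ is a polynomial in $n$; since $F$ has lowest-order term $z^2$, the power $F(z)^j$ starts at $z^{2j}$ and the degree of this polynomial is $\lfloor k/2\rfloor$.

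It then remains to read off the leading coefficients. For $k=2m$ the top term is $n^m[z^{2m}]F(z)^m$, and the only way to obtain $z^{2m}$ from $m$ factors each of order at least $2$ is to take the $z^2$-term from every factor, giving leading coefficient $f_2^m$. As the length-$2$ excursions at $o$ are exactly the walks $o\to v\to o$ with $v$ a neighbor of $o$, one has $f_2=d_o$, whence $n^{-m}\big((A^{(n)})^{2m}\big)_{oo}\to d_o^m$. For $k=2m+1$ odd the polynomial has degree at most $m$, so $\big((A^{(n)})^{2m+1}\big)_{oo}=O(n^{m})$ and dividing by $n^{m+1/2}$ gives $O(n^{-1/2})\to 0$. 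This establishes the claimed moment convergence, and weak convergence follows from the determinacy of the compactly supported limit.

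The main obstacle is the combinatorial bookkeeping underlying the second paragraph: one must verify carefully that the excursion decomposition of a rooted closed walk is genuinely unique and that the copy labels may be assigned to the excursions completely freely, so that the count factorizes as $n^{j}$ times a quantity independent of $n$ (equivalently, that the first-return generating function of $G^{(n)}$ is precisely $nF(z)$). Once this factorization is secured, the rescaled moments are governed solely by the top coefficient of a polynomial in $n$, and everything reduces to the elementary identity $f_2=d_o$; the only analytic input is the standard fact that a compactly supported measure is determined by its moments.
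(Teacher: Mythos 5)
Your proof is correct. Note that the paper does not prove this statement itself: it quotes it from Obata, whose original argument goes through the Sz\"ego--Jacobi (orthogonal polynomial / quantum decomposition) coefficients of $\mu_o^{(n)}$, so your method of moments via the first-return decomposition is a genuinely different and more elementary route. It is, however, essentially the scalar shadow of the machinery the paper deploys for its multivariate generalization: your renewal identity $\sum_{k\ge 0}\big((A^{(n)})^k\big)_{oo}z^k=(1-nF(z))^{-1}$ is exactly the $u=\{o\}$ case of the Schur-complement formula $R_u^{(n)}(z)=\big(I-zA_{uu}-nz^2A_{u\overline u}(I-zA_{\overline u\overline u})^{-1}A_{\overline u u}\big)^{-1}$ used in the proof of the multivariate theorem, with $F(z)=z^2A_{o\overline o}(I-zA_{\overline o\overline o})^{-1}A_{\overline o o}$ the excursion generating function $E_{\{o\}}(z)$ of Section 4; your extraction of the leading coefficient $n^{m}f_2^{m}$ with $f_2=d_o$ plays the role of the paper's limit $R_u^{(n)}(z/\sqrt n)\to(I-z^2D)^{-1}$, $D=A_{u\overline u}A_{\overline u u}$, which reduces to the scalar $d_o$ when $u=\{o\}$. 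What your version buys is a fully combinatorial, self-contained proof (everything is the top coefficient of a polynomial in $n$); what the resolvent formulation buys is the painless passage to several vertices, where one must control determinants rather than single diagonal entries. Two small points of care: the polynomial $\sum_j n^j[z^k]F(z)^j$ has degree \emph{at most} $\lfloor k/2\rfloor$ (with equality for even $k$ when $d_o\ge 1$), which is all your odd-moment estimate needs; and you implicitly use that $G$ has no self-loop at $o$, so that $F$ starts at $z^2$ --- without this the odd moments would not be negligible, and this hypothesis is indeed part of Obata's setting.
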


 The proof relies on showing the convergence of moments via the Sz\"ego-Jacobi sequence derived from the adjacency matrix of $G^{(n)}$. Obata's result can be stated equivalently as the convergence in distribution
$$ \frac{X^{(n)} }{\sqrt n} \xrightarrow[n \to \infty]{d} B \sqrt{d_0}, $$
where $X^{(n)}$ is a random variable with distribution $\mu_o^{(n)}$ and $B$ is a Rademacher random variable: $\mathbb P(B=1)=\mathbb P(B=-1)=1/2$. We show a multivariate version of this result, where $G^{(n)}$ is the the graph obtained by merging the $n$ copies of $G$ at $p \geq 1$ vertices, say $u_1,...,u_p$. For simplicity and without loss of generality, we assume that $u_1,...,u_p$ are the first $p$ vertices of $G$. As a result, the adjacency matrix $A^{(n)}$ of $G^{(n)}$ contains $p+n(N-p)$ rows and columns and can be decomposed by blocks as
$$ A^{(n)} = \left[ \begin{array}{cccc} 
A_{uu} & A_{u \overline u} & \dots &  A_{u \overline u} \\ 
A_{\overline u u } & A_{ \overline u \overline u} & 0 & 0 \\
\vdots & 0 & \ddots & 0 \\ 
A_{\overline u u } & 0 & 0 & A_{ \overline u \overline u} \end{array}
 \right]  $$
where $u = \{u_1,...,u_p\}$ and $\overline u = \{1,...,N\}\setminus u$. We consider the joint spectral measure $\mu^{(n)}_u$ of the first $p$ vertices in the graph $G^{(n)}$. For quasi-random variables, the convergence in distribution signifies the weak convergence of the signed measures.

\begin{theorem} Let $X^{(n)}=\big(X_1^{(n)},...,X_{p}^{(n)}\big)$ be a quasi-random vector with distribution $\mu^{(n)}_u$, 
 $$ \frac{ \big( X^{(n)}_1 ,...,X^{(n)}_p \big)}{\sqrt{n}} \xrightarrow[n \to \infty]{d}  \big(B_1\sqrt{Y_1}, \cdots,B_p\sqrt{Y_p}  \big),$$
where $Y=(Y_1,...,Y_p)$ is a quasi-random vector with distribution the joint spectral measure of $D = A_{u \overline u} A_{\overline u u}$ and $B_1,...,B_p$ are iid Rademacher random variables, independant of $Y$.
\end{theorem}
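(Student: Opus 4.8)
The plan is to prove the stated convergence in distribution by establishing convergence of all generalized moments, since the limiting law is compactly supported and therefore determined by its moments. By Lemma \ref{lem:gen_mom} applied to the $p$-dimensional marginal of the full joint spectral measure of $A^{(n)}$ (taking the remaining exponents equal to $0$ and expanding the resulting determinant along the identity columns indexed by the petal vertices), the generalized moments of $\mu_u^{(n)}$ reduce to a $p \times p$ determinant
$$ \mathbb{E}\Big( \prod_{b=1}^p \big(X_b^{(n)}\big)^{k_b} \Big) = \det\Big( \big[ \big((A^{(n)})^{k_b}\big)_{ab} \big]_{a,b \in u} \Big), $$
so that after rescaling I must analyze $n^{-(k_1+\dots+k_p)/2}\det\big( [ ((A^{(n)})^{k_b})_{ab} ]_{a,b\in u}\big)$ as $n \to \infty$. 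On the limiting side, using the independence of the Rademacher variables $B_i$ from $Y$ and $\mathbb{E}(B_i^{k})=\ind\{k \text{ even}\}$, together with Lemma \ref{lem:gen_mom} for the symmetric positive semidefinite matrix $D = A_{u\overline u}A_{\overline u u}$, the target moment equals $\det\big(D[k_1/2,\dots,k_p/2]\big)$ when all $k_b$ are even and $0$ otherwise.

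The crux is then the asymptotics of a single entry $\big((A^{(n)})^k\big)_{ab}$ for $a,b \in u$, which counts (with weights) the walks of length $k$ from $a$ to $b$ in $G^{(n)}$. I would decompose such a walk according to its successive visits to the shared center $u$: between two consecutive center-visits the walk either takes a single center-to-center step (weighted by $A_{uu}$) or performs an \emph{excursion} confined to the petals $\overline u$ of a single one of the $n$ copies, bracketed by its two endpoints in $u$. The key observation is that distinct excursions choose their copy independently, because the walk returns to the shared center between them, so a walk with $e$ excursions contributes a factor $n^e$ to leading order. Since every excursion has length at least $2$, a walk of length $k$ has at most $\lfloor k/2\rfloor$ excursions, and the maximal value $e = k/2$ is attained only when $k$ is even, there is no center-to-center step, and every excursion has length exactly $2$. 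Summing a length-$2$ excursion $a' \to \overline u \to b'$ over the intermediate petal vertex and over the $n$ identical copies yields exactly $n\sum_{c \in \overline u} a_{a'c}a_{cb'} = n\, D_{a'b'}$, whence
$$ \big((A^{(n)})^{k}\big)_{ab} = n^{k/2}\,\big(D^{k/2}\big)_{ab} + O\big(n^{k/2-1}\big) \quad (k \text{ even}), \qquad \big((A^{(n)})^{k}\big)_{ab} = O\big(n^{(k-1)/2}\big)\quad (k \text{ odd}). $$

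Substituting these estimates into the determinant, every product $\prod_b ((A^{(n)})^{k_b})_{\sigma(b)b}$ grows like $n^{\sum_b \lfloor k_b/2\rfloor}$, which after dividing by $n^{\sum_b k_b/2}$ decays whenever some $k_b$ is odd and converges exactly when all $k_b = 2\ell_b$ are even, in which case each entry contributes its leading coefficient $(D^{\ell_b})_{\sigma(b)b}$. Collecting terms gives
$$ \lim_{n\to\infty} \frac{1}{n^{(k_1+\dots+k_p)/2}} \det\Big( \big[ \big((A^{(n)})^{k_b}\big)_{ab} \big]_{a,b\in u}\Big) = \det\big(D[\ell_1,\dots,\ell_p]\big) = \mathbb{E}\Big(\prod_{b=1}^p Y_b^{\ell_b}\Big), $$
matching the limiting moments computed above, while both sides vanish when some $k_b$ is odd. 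I expect the main obstacle to be twofold: carefully justifying that the excursion decomposition isolates the power of $n$ with an error that is genuinely $O(n^{k/2-1})$ (controlling the subleading walks having fewer or longer excursions, together with the center-to-center steps), and upgrading moment convergence to weak convergence of the signed measures. For the latter I would use that the operator norm of $A^{(n)}$ is $O(\sqrt n)$, so the rescaled supports $\{\lambda_\sigma/\sqrt n\}$ lie in a fixed compact set; combined with a uniform bound on the total variation this lets polynomials approximate bounded continuous test functions uniformly, transferring the moment convergence to the desired weak convergence against the moment-determinate limit.
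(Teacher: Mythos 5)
Your proposal is correct and hits the same intermediate targets as the paper --- the reduction via Lemma \ref{lem:gen_mom} to the $p\times p$ determinant $\det\big(\big[((A^{(n)})^{k_b})_{ab}\big]_{a,b\in u}\big)$, the limiting moments $\det\big(D[k_1/2,\dots,k_p/2]\big)$ for all-even exponents and $0$ otherwise, and the identification of the limit law through $\mathbb E(B_i^{k})=\ind\{k \text{ even}\}$ (the paper does this last step via the moment generating function and Fubini, you do it coordinate-wise; same content) --- but the central asymptotic step is handled by a genuinely different technique. The paper works at the level of generating functions: it applies the Schur complement formula to $I-zA^{(n)}$ to obtain $R_u^{(n)}(z)=\big(I-zA_{uu}-nz^2A_{u\overline u}(I-zA_{\overline u\overline u})^{-1}A_{\overline u u}\big)^{-1}$, substitutes $z/\sqrt n$, and reads off $(A^{(n)\,k})_{uu}/\sqrt n^{\,k}\to D^{k/2}$ ($k$ even) from the limit $(I-z^2D)^{-1}$ of the power series, with the Frobenius-norm bound supplying the convergence radius. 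Your excursion decomposition of individual walks, where each excursion into the petals independently selects one of the $n$ copies, is precisely the combinatorial content of that Schur complement (compare $E_u(z)$ in Section 4 of the paper), carried out walk-by-walk rather than through the resolvent. Your version buys transparency --- it makes visible why only length-two excursions survive the rescaling and why $D$ appears --- at the cost of bookkeeping on the $O(n^{k/2-1})$ error term, which is in fact unproblematic because a walk of length $k$ in $G^{(n)}$ projects to one of finitely many walks in $G$ (a number independent of $n$), each lifting to exactly $n^{e}$ walks where $e$ is its number of excursions. One remark: you correctly flag that passing from moment convergence to weak convergence of \emph{signed} measures needs, besides the uniformly bounded rescaled support that both you and the paper get from $\Vert A^{(n)}\Vert_F\le\sqrt n\,\Vert A\Vert_F$, a uniform bound on the total variation of $\mu_u^{(n)}$; the paper is silent on this point, so your treatment is if anything the more careful of the two, though neither argument actually supplies that bound.
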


\begin{proof} Recall that the $X^{(n)}$ takes values in the permutations of the eigenvalues of $A^{(n)}$. Let $\| . \|$ and $\| . \|_F$ denote respectively the Euclidean and Frobenius norms, we have
\begin{equation}\label{proof:eq1} \Vert X^{(n)} \Vert^2 = \|A^{(n)} \|^2_F \leq n \|A\|_F^2.
\end{equation}
Thus, $\Vert X^{(n)} \Vert/\sqrt n \leq \|A\|_F$ almost-surely. To prove the weak convergence, it now suffices to prove the convergence of the moments
$$ m^{(n)}_u[k_1,...,k_p] := \mathbb E \bigg( \bigg(\frac{X^{(n)}_1}{\sqrt n}\bigg)^{k_1} ... \ \bigg(\frac{X^{(n)}_p}{\sqrt n}\bigg)^{k_p} \bigg) = \frac 1 {\sqrt n^{k_1+...+k_p}} \det \Big(A^{(n)}[k_1,...,k_p,0,...,0] \Big) $$
where the last equality follows from Lemma \ref{lem:gen_mom}.
By Schur's complement formula applied to $I - z A^{(n)}$, the $(u,u)$-block of the resolvant satisfies
$$ R_u^{(n)}(z) := \Big(\big(I - z A^{(n)} \big)^{-1} \Big)_{uu} = \Big( I-zA_{uu}-n z^2A_{u \overline u} \big(I- zA_{\overline{u}\overline{u}}\big)^{-1}   A_{\overline{u}u} \Big)^{-1}.  $$
\begin{eqnarray*} R_u^{(n)} \Big(\frac{z}{\sqrt{n}} \Big) & = &  \Big( I-\frac{z}{\sqrt{n}}A_{uu}-z^2A_{u \overline u} \Big(I- \frac{z}{\sqrt{n}}A_{\overline{u}\overline{u}}\Big)^{-1}   A_{\overline{u}u} \Big)^{-1} \\
&   = & I + z \frac{\big(A^{(n)}\big)_{uu}}{\sqrt n} + z^2 \frac{\big(A^{(n) 2}\big)_{uu}}{\sqrt n^2} + ... \\
& \xrightarrow[n \to \infty]{} & \big(I-z^2 D\big)^{-1} = I + z^2 D + z^4 D^2 + ...
\end{eqnarray*}
where we recall $D = A_{u \overline u} A_{\overline u u}$. The sub-multiplicativity of the Frobenius norm combined with Eq.~\eqref{proof:eq1} gives $\big\Vert \big(A^{(n)}\big)^k_{uu} \big\Vert_F/\sqrt n^k \leq \|A\|_F^k$. Thus, the series has positive convergence radius and by continuity of the determinant, we deduce
$$ m^{(n)}_u[k_1,...,k_p] \underset{n \to \infty}{\longrightarrow} \left\{ \begin{array}{cl}  \det \Big( D \big[k_1/2,...,k_p/2 \big] \Big) & \text{if $k_1,...,k_p$ are even,} \\ 
0 & \text{otherwise.}\end{array} \right.    $$
For $(z_1,...,z_p)$ in a sufficiently small neighborhood of $0$, the moment generating function verifies
\begin{align*} \mathbb E \bigg(  \prod_{i =1}^p \frac{1}{1-z_i X^{(n)}_i/\sqrt n } \bigg) & = \sum_{k_1,...,k_p =0}^\infty z_1^{k_1}...z_p^{k_p} \ m^{(n)}_u[k_1,...,k_p] \\
& \underset{n \to \infty}{\longrightarrow} \sum_{k_1,...,k_p =0}^\infty z_1^{2k_1}...z_p^{2k_p} \ \det \big( D [k_1,...,k_p] \big) = \mathbb E \bigg(  \prod_{i =1}^p \frac{1}{1-z_i^2 Y_i} \bigg) 
\end{align*}
where $Y=(Y_1,...,Y_p)$ is a quasi-random vector with distribution the joint spectral measure of $D$. Finally, let $B_1,...,B_p$ be iid Rademacher variables independent of $Y$, we have by Fubini's theorem
$$ \mathbb E \bigg( \prod_{i =1}^p \frac{1}{1-z_i B_i \sqrt{Y_i}} \bigg) = \mathbb E \bigg( \prod_{i =1}^p \bigg[ \frac 1 2 \frac{1}{1-z_i \sqrt{Y_i}} + \frac 1 2 \frac{1}{1+z_i \sqrt{Y_i}} \bigg] \bigg)= \mathbb E \bigg(  \prod_{i =1}^p \frac{1}{1-z_i^2 Y_i} \bigg).  $$
Hence, $\big(X^{(n)}_1/\sqrt n,...,X^{(n)}_p/ \sqrt n \big) $ converges in distribution towards $(B_1  \sqrt{Y_1},...,B_p \sqrt{Y_p} \big)$.
\end{proof}

Remark that because $D= A_{u \overline u} A_{\overline u u}$ is positive definite, $Y_i$ is always non-negative almost-surely and $\sqrt{Y_i}$ is well-defined. The entry $D_{u_i u_j}$ gives the number of vertices in $\overline u$ that are adjacent to both $u_i$ and $u_j$. Obata's result corresponds to the particular case $u =\{o\}$ where $D$ is equal to the degree $d_o$ of the root $o$ in $G$ and the associated joint spectral measure is a Dirac mass at $d_o$.

\section{Combinatorial properties}

In this section, we investigate the combinatorial aspects of the joint spectral measure via its relations with paths enumeration on a graph $G$. Thus, $A$ is viewed here as the (possibly weighted) adjacency matrix of a graph $G$ with vertex set $\{1,...,N\}$. As usual when one is interested in enumerating paths in $G$, the entries $a_{ij}$ of $A$ may be thought of as formal variables whenever $(i,j)$ is an edge and $a_{ij} =0$ otherwise. A path of length $n$ from $i$ to $j$ in $G$ is a succession of $n$ contiguous edges $w= a_{i i_1} a_{i_1 i_2} ... a_{i_{n-1 j}}$. The length of a path $w$ is denoted by $\ell(w)$. By convention, the null path $1$ is a path of length $0$ from one vertex to itself.\\

A simple cycle is a closed path in the graph that does not visit the same vertex twice before its return to its starting vertex. When endowed with a specific partially commutative rule, product of cycles form well studied algebraic objects originally introduced as circuits by Cartier and Foata \cite{cartier1969} later  revisited as heaps of cycles by Viennot \cite{viennot1989heaps} or hikes \cite{giscard2017algebraic}. The free partially commutative monoid (or trace monoid) of hikes consists of all finite products of simple cycles $h=c_1...c_n$ (some cycles may be repeated), allowing to permute two consecutive cycles only if they have no vertex in common. For instance, $ab^2 = bab = b^2a$ if $a$ and $b$ are vertex disjoint in $G$ but all three terms all different if $a$ and $b$ share at least one vertex (and $a\neq b$). This somewhat peculiar structure is heavily related to the spectral properties of the graph. In particular, letting $\mathcal H$ denote the set of hikes in $G$, the zeta function of $\mathcal H$ (or characteristic function) is the determinant of the resolvant $R(z) =  (I - z A)^{-1}$
$$ \zeta(z) := \sum_{h \in \mathcal H} h z^{\ell(h)} = \det \big( R(z) \big) , $$
where $\ell(h)$ is the length of $h$, that is the added length of all cycles composing $h$. Equivalently, the (slightly modified) characteristic polynomial of $A$ is the Mobi\"us function of $\mathcal H$,
$$ M(z) := \frac{1}{\zeta(z)} = \det \big(I-zA \big), $$ 
whose expression in terms of products of vertex-disjoint cycles is well known. Due to the numerous similarities with their number theoretic counterparts, simple cycles have been described as the prime elements in $\mathcal H$. The representation of a hike as a product of simple cycles is its prime decomposition, which is unique up to permuting consecutive vertex-disjoint cycles \cite{giscard2017algebraic}. We say that $d \in \mathcal H$ is a right divisor of a hike $h$ if there exists $h' \in \mathcal H$ such that $h = h'd$. \\

While all closed paths are hikes, the reverse is clearly not true. Viennot remarked that closed paths can be characterized as hikes with a unique prime right divisor, called pyramids in the context of heaps of pieces (see Proposition 7 in \cite{viennot1989heaps}). However, when viewed as a hike, a closed path $w$ loses the information of its starting point. In fact, the vertices in the unique prime right divisor of $w$ are the possible starting points of a closed path. Thus, the number of closed paths that are associated with the same hike is equal to the length of the unique right prime divisor. This observation gave rise to the so-called hike von-Mangoldt function
$$ \Lambda(h) := \left\{ \begin{array}{cl} \ell(p) & \text{if $h$ has $p$ as its unique prime right divisor of $h$} \\ 0 & \text{otherwise,} \end{array} \right.  $$
whose associated generating function is given by the trace of the resolvant 
$$ \sum_{h \in \mathcal H} \Lambda(h) h z^{\ell(h)} =  \tr \big( R(z) \big) = \tr \big( I + zA + z^2 A^2 + ... \big).  $$
More details can be found in \cite{giscard2017algebraic}.

\begin{definition} An excursion on a proper subset $u$ of vertices in $G$ is a path that starts and ends in $u$ but does not visit $u$ in between. Formally, a path $w= a_{i i_1} a_{i_1 i_2} ... a_{i_{n-1 j}}$ is an excursion on $u$ in $G$ if $i,j \in u$ and $i_k \in \overline u, \forall k =1,...,n-1$. 
\end{definition}

Let $\mathcal E_{ij}(u)$ denote the set of excursions from $i$ to $j$ on $u$ in $G$ and $E_u(z)$ the matrix generating function of the excursions on $u$. Since an excursion on $u$ in $G$ can be decomposed uniquely as the concatenation of an edge from $u$ to $\overline u$ followed by a path (possibly empty) in $\overline u$ and a edge from $\overline u$ to $u$, one verifies easily that
$$ E_u(z) := \Big( \sum_{w \in \mathcal E_{ij}(u)} w \ z^{\ell(w)}\Big)_{i,j \in u}  = z A_{uu} + z^2 A_{u \overline u} \big(I - z A_{\overline u \overline u} \big)^{-1} A_{\overline u u}. $$
From this expression, it is apparent that $I-E_u(z)$ is in fact the Schur complement of the block $I - zA_{uu}$ in $I-zA$.
Hence, excursions emerge naturally from submatrices of the resolvant $R(z)=\big(I-zA\big)^{-1}$ by
\begin{equation}\label{eq:mat_exc} \big(I - E_u(z) \big)^{-1} = \big( R(z) \big)_{uu} =:R_u(z). \end{equation}
The associated minor of the resolvant, 
$$ r_u(z) := \det \big(I - E_u(z) \big)^{-1} = \det \big( R_u(z) \big) $$
is a generating function of a certain type a hikes, as we show in Proposition \ref{prop:es2} below. Minors of the resolvant can also be expressed as generalized moments of the joint spectral measure. A direct application of Proposition \ref{prop:gen_mom_fun} to $f(x) = 1/(1-zx)$,
$$ r_u(z) = \det \big( R_u(z) \big) = \mathbb E \Big( \prod_{i \in u} \frac{1}{1-z X_i} \Big),  $$
shows that $r_u(z) $ is the (homogenous) moment generating function of $(X_i)_{i \in u}$.  Remark that the series $r_u(z)$ is not the hike zeta function in the induced subgraph $G(u)$. The latter is simply given by
$$ \zeta_u(z) := \det \Big( \big(I-zA_{uu} \big)^{-1} \Big) \neq \det \Big(\big(I-zA \big)^{-1}_{uu} \Big).  $$
\begin{proposition}\label{prop:es2} The series $r_u(z)$ is the generating function of hikes whose right divisors all intersect $u$.
\end{proposition}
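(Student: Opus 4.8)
The plan is to reduce the statement to a unique-factorization property of hikes via the Schur complement, and then to establish that factorization combinatorially using the heap-of-pieces structure. First I would record the determinantal identity behind $r_u$. Writing $I-zA$ in block form with respect to $u$ and $\overline u$, the matrix $I-E_u(z)$ is exactly the Schur complement of the block $I-zA_{\overline u\overline u}$, so the Schur determinant formula gives
$$ \det\big(I-zA\big) = \det\big(I-zA_{\overline u\overline u}\big)\,\det\big(I-E_u(z)\big). $$
Taking reciprocals and recalling $\zeta(z)=1/\det(I-zA)$ together with $r_u(z)=1/\det(I-E_u(z))$, this reads $\zeta(z)=\zeta_{\overline u}(z)\,r_u(z)$, where $\zeta_{\overline u}(z)=1/\det(I-zA_{\overline u\overline u})$ is the hike zeta function of the induced subgraph on $\overline u$, i.e.\ the generating function of hikes supported entirely in $\overline u$. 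Since $\zeta_{\overline u}$ has constant term $1$ it is invertible in the relevant power-series ring, so the proposition becomes the assertion that $r_u=\zeta/\zeta_{\overline u}$ enumerates precisely the hikes whose right divisors all meet $u$.

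Then I would prove the combinatorial heart: \emph{every hike $h$ admits a unique factorization $h=h_u\,h_{\overline u}$, where $h_{\overline u}$ is a hike supported in $\overline u$ and $h_u$ is a hike all of whose prime right divisors meet $u$}, with $\ell(h)=\ell(h_u)+\ell(h_{\overline u})$ and the edge multisets adding up. I would argue in Viennot's heap picture, in which $h$ is a heap of simple cycles and right divisors correspond exactly to order filters (up-sets) of the heap poset. The union of any family of filters is again a filter, so among the filters all of whose pieces are cycles contained in $\overline u$ there is a unique maximal one; I set $h_{\overline u}$ to be the corresponding right divisor and $h_u$ the complementary prefix, whence $h_{\overline u}$ is a hike in $\overline u$ by construction. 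Note that ``all right divisors meet $u$'' is equivalent to ``all prime right divisors meet $u$'', which in heap terms is ``no maximal piece is a cycle contained in $\overline u$'', so it is this last form I must verify for $h_u$.

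It remains to check that $h_u$ has no maximal piece supported in $\overline u$, and this is where the maximality of the chosen $\overline u$-filter enters: if some maximal piece $c$ of $h_u$ were a cycle in $\overline u$, then every piece lying above $c$ in $h$ would already belong to $h_{\overline u}$, so $\{c\}\cup h_{\overline u}$ would be a strictly larger $\overline u$-filter, a contradiction. Conversely, given any admissible pair $(h_u,h_{\overline u})$, a symmetric argument shows $h_{\overline u}$ is the maximal $\overline u$-filter of the product $h_u h_{\overline u}$ (a larger one would contain a maximal piece of $h_u$ lying in $\overline u$), so $h\mapsto(h_u,h_{\overline u})$ is a bijection. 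Summing weights over this bijection gives $\zeta=\big(\sum_{h_u}\mathrm{wt}(h_u)z^{\ell(h_u)}\big)\,\zeta_{\overline u}$, and comparison with $\zeta=r_u\,\zeta_{\overline u}$ identifies $r_u$ with the generating function of hikes whose right divisors all meet $u$. I expect the main obstacle to be precisely this heap bookkeeping — the equivalence between ``all right divisors meet $u$'' and ``no maximal piece lies in $\overline u$'', and the verification that the complement of the maximal $\overline u$-filter is characterized exactly this way; once the bijection is in place, the conclusion is a formal consequence of the Schur factorization.
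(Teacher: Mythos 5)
Your proof is correct and follows essentially the same route as the paper: the Schur determinant identity yielding $r_u(z)=\zeta(z)/\zeta_{\overline u}(z)$, followed by the combinatorial cancellation of hikes having a prime right divisor supported in $\overline u$. The only difference is that where the paper simply cites Viennot's Proposition 5 for this cancellation, you prove the underlying unique factorization $h=h_u h_{\overline u}$ directly via the maximal $\overline u$-filter of the heap, which is a sound and self-contained rendering of exactly that cited result.
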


\begin{proof} Recall that $I - E_u(z)$ is the Schur complement of the block $I-zA_{uu}$ in $I-zA$, in particular
$$ \det \big( I - zA\big) = \det \big( I-z A_{\overline u \overline u} \big) \det \big(I - E_u(z) \big), $$
or equivalently $r_u(z) = \zeta(z) / \zeta_{\overline u}(z)$. By Proposition 5 in \cite{viennot1989heaps}, multiplication of $\zeta(z)$ by the Mobi\"us function on $G(\overline u)$ cancels out all hikes with at least one right prime divisor in $G(\overline u)$. 
\end{proof}

%
%

The link between $r_u(z)$ and the excursions comes from the fact that a hike has all its right divisors intersecting $u$ if and only if it can be decomposed as a product of excursions on $u$. Similarly, a closed path $w$ starting from a vertex in $u$ can be divided into a succession of excursions that eventually returns to its starting point. For the next result, let $\ell_u(h)$ denote the number of excursions on $u$ that compose a hike $h$, or equivalently the number of vertices in $u$ visited by $h$, counted with multiplicity. We define the function
$$ \Lambda_u(h) := \left\{ \begin{array}{cl} \ell_u(p) & \text{if $h$ has $p$ as its unique prime right divisor of $h$} \\ 0 & \text{otherwise} \end{array} \right. , \ h \in \mathcal H , $$
generated by the trace of the $(u,u)$-block of the resolvant: $\sum_{h \in \mathcal H} \Lambda_u(h) h z^{\ell(h)} = \tr \big ( R_u(z) \big)$, 

\begin{proposition}\label{prop:es3} We have
$$\log \big(r_u(z) \big) = \sum_{h \in \mathcal H} \frac{\Lambda_u(h)}{\ell_u(h)} h z^{\ell(h)},$$
with the convention $\Lambda_u(h)/\ell_u(h)=0$ if $\ell_u(h)=0$. 
\end{proposition}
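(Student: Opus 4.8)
The plan is to combine the matrix identity $\log\det = \tr\log$ with the excursion factorization \eqref{eq:mat_exc} and then read off the coefficients combinatorially. Since $E_u(z)$ has entries that are formal power series in $z$ with no constant term, $R_u(z) = (I - E_u(z))^{-1}$ is a well-defined power series matrix with $R_u(0)=I$, so that $\log R_u(z) = -\log(I - E_u(z)) = \sum_{k\ge 1}\tfrac1k E_u(z)^k$ is meaningful as a formal object. Writing $\log r_u(z) = \log\det R_u(z) = \tr\log R_u(z)$ then yields
$$ \log r_u(z) = \sum_{k\ge 1}\frac 1k \,\tr\big(E_u(z)^k\big). $$

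Next I would identify each $\tr(E_u(z)^k)$ combinatorially. By definition of $E_u(z)$ as the matrix generating function of excursions, $\big(E_u(z)^k\big)_{ii}$ enumerates the concatenations of $k$ excursions on $u$ starting and ending at $i$, weighted by $z^{\ell}$; summing over $i\in u$ therefore gives the generating function of closed paths rooted at a vertex of $u$ and composed of exactly $k$ excursions. Such a closed path visits $u$ exactly $k$ times, hence as a hike $h$ it satisfies $\ell_u(h)=k$, and being a closed path it is a pyramid with a unique prime right divisor $p$. Exactly as in the univariate von-Mangoldt statement recalled above, the admissible starting vertices of the path are the vertices of $p$, but here only those lying in $u$ may serve as roots, so their number is $\ell_u(p)=\Lambda_u(h)$. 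This gives
$$ \tr\big(E_u(z)^k\big) = \sum_{\substack{h\in\mathcal H\\ \ell_u(h)=k}} \Lambda_u(h)\, h\, z^{\ell(h)}, $$
which is precisely the $\ell_u=k$ graded part of $\tr\big(R_u(z)\big)=\sum_{h}\Lambda_u(h)\,h\,z^{\ell(h)}$, consistent with $R_u=\sum_{k\ge 0}E_u^k$.

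Substituting this expression into the first display and reindexing the double sum by $h$ gives
$$ \log r_u(z) = \sum_{k\ge 1}\frac 1k \sum_{h:\,\ell_u(h)=k}\Lambda_u(h)\, h\, z^{\ell(h)} = \sum_{h\in\mathcal H}\frac{\Lambda_u(h)}{\ell_u(h)}\, h\, z^{\ell(h)}, $$
the hikes with $\ell_u(h)=0$ contributing nothing and being covered by the stated convention. The $\tr\log$ expansion is routine; the main obstacle is the weighted count in the middle step, namely verifying that a closed path made of $k$ excursions maps to a pyramid $h$ with $\ell_u(h)=k$ and that the number of distinct such closed paths producing a given $h$ equals $\ell_u(p)=\Lambda_u(h)$ — that is, that the admissible roots in $u$ are exactly the $u$-vertices of the unique prime right divisor. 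This is the $\ell_u$-graded refinement of Viennot's pyramid correspondence already used for $\tr(R_u)$, and one must check that the excursion count is preserved under concatenation and is independent of the chosen root.
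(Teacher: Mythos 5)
Your proposal is correct and follows essentially the same route as the paper: expand $\log r_u(z) = \tr\bigl(-\log(I-E_u(z))\bigr) = \sum_{k\geq 1}\tfrac{1}{k}\tr\bigl(E_u(z)^k\bigr)$ and identify the $k$-th term combinatorially, with the factor $k=\ell_u(h)$ coming from the power of $E_u$ and the factor $\Lambda_u(h)$ from the trace counting the admissible starting vertices in $u$ of the unique prime right divisor. Your write-up is somewhat more explicit than the paper's about the graded identification $\tr(E_u(z)^k)=\sum_{\ell_u(h)=k}\Lambda_u(h)\,h\,z^{\ell(h)}$ and its consistency with $R_u=\sum_k E_u^k$, but the argument is the same.
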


\begin{proof} From Eq.~\eqref{eq:mat_exc}, $ \log \big(r_u(z) \big) = - \log \big( \det \big( I - E_u(z) \big) \big) = \tr \big( - \log (I-E_u(z)) \big)$. By a Taylor expansion of $-\log(1-x)$ at $x=0$, we get
$$ \log \big(r_u(z) \big) = \tr \Big( E_u(z) + \frac{E_u(z)^2}{2} + \frac{E_u(z)^3}{3} + ... \Big).$$
We now identify each path in the series. The denominator $\ell_u(h)$ results from the diagonal entries of $E_u(z)^k$ which enumerate the closed paths that can be written as a product of $k$ excursions on $u$. The numerator $\Lambda_u(h)$ is due to the trace,  which counts the number of possible starting points of each path.
\end{proof}

Let us investigate the particular case $u = \{i\}$. An excursion on $\{i\}$ is a closed paths starting (and ending) at $i$ that do not visit $i$ in between. Since every closed paths from $i$ to $i$ can be decomposed uniquely as a product of excursions on $\{i\}$, we verify easily the relations
$$ R_{\{i\}}(z) = \sum_{w : i \to i} w z^{\ell(w)} = \frac{1}{1-E_{\{i\}}(z)} = 1+E_{\{i\}} + E_{\{i\}}^2 +... $$
Remark that a hike whose right divisors all intersect $\{i \}$ is a closed path, as two primes having a common vertex do not commute. Hence, Proposition \ref{prop:es3} is trivially true in this case with
$$ r_{\{ i \}} (z) = \det \big( R_{\{i\}} (z) \big) =  R_{\{i\}}(z). $$
Following \cite{boolean}, $E_{\{i\}}(z)$ corresponds to the $B$-transform associated to moment generating function $r_{\{ i \}} (z)$. Hence, the Boolean cumulants of $X_i$ enumerate the excursions on $\{i\}$. \\

In the case $u = \{ i \}$, $\Lambda_{\{i\}} (h) \in \{ 0,1 \}$ with the value $1$ if and only if $h$ is a closed path starting from $i$. Thus, Proposition \ref{prop:es3} yields
$$ \log \big( R_{\{i\}}(z) \big) = \sum_{h \in \mathcal H} \frac{\Lambda_{\{i\}}(h)}{\ell_{\{i\}}(h)} h z^{\ell(h)} = \sum_{w:i \to i} \frac{1}{\ell_{\{i\}}(w)} w z^{\ell(w)}. $$
In other words, the logarithm of the $i$-th diagonal entry of the resolvant is the generating series of closed paths starting from $i$, divided by their number of visits to $i$.

\section*{Appendix}

\subsection{Unicity of the joint spectral measure}\label{a:basis}

We give a direct proof that, although its definition involves a basis matrix $P$, the joint spectral measure $\mu$ does not depend on the choice of the eigendecomposition of $A$. We start with the following lemma.

\begin{lemma}\label{odot} Let $C=(c_{ij})_{i,j=1,...,N}$ be a block diagonal matrix with entries one in each block (and zero elsewhere) and $B=(b_{ij})_{i,j=1,...,N} $ a block diagonal matrix with support included in the support of $C$, i.e. such that $c_{ij} = 0 \Rightarrow b_{ij}=0$. Then, for all matrix $M=(m_{ij})_{i,j=1,...,N}$, 
$$ (MB) \odot C = (M \odot C)B.  $$
\end{lemma}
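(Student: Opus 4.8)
The plan is to translate the block structure of $C$ into a family of coordinate projections and then check that both sides reduce to the same expression. Let $\beta_1,\dots,\beta_r$ be the blocks of $\{1,\dots,N\}$ determined by $C$, so that $c_{ij}=1$ exactly when $i$ and $j$ lie in a common block and $c_{ij}=0$ otherwise. For each block $\beta$ I would introduce the diagonal $0$--$1$ matrix $J_\beta$ whose nonzero diagonal entries are those indexed by $\beta$; these are orthogonal projections satisfying $J_\beta J_\gamma = \delta_{\beta\gamma} J_\beta$ and $\sum_\beta J_\beta = I$.

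The first step is to record two elementary identities. The Hadamard product with $C$ simply truncates a matrix to its block-diagonal part, which can be written as $M \odot C = \sum_\beta J_\beta M J_\beta$ for every $M$. Second, the hypothesis that the support of $B$ is contained in that of $C$ says precisely that $b_{ij}\neq 0$ only when $i$ and $j$ share a block of $C$, i.e. $B = \sum_\beta J_\beta B J_\beta$; consequently $J_\beta B = J_\beta B J_\beta = B J_\beta$ for each $\beta$ (the projector slides freely through $B$ within a block).

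With these in hand the computation is immediate. On one side, $(M \odot C)B = \sum_\beta J_\beta M J_\beta B = \sum_\beta J_\beta M J_\beta B J_\beta$, using $J_\beta B = J_\beta B J_\beta$. On the other side, $(MB)\odot C = \sum_\beta J_\beta (MB) J_\beta = \sum_\beta J_\beta M B J_\beta = \sum_\beta J_\beta M J_\beta B J_\beta$, now using $B J_\beta = J_\beta B J_\beta$. Both therefore equal $\sum_\beta J_\beta M J_\beta B J_\beta$, which proves the claim.

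There is no serious obstacle here; the only point requiring care is the justification of the two identities, and in particular seeing that the support hypothesis $c_{ij}=0 \Rightarrow b_{ij}=0$ is exactly what guarantees $J_\beta B = B J_\beta$ on each block. An equivalent and even more pedestrian route would be to compare the two sides entrywise: both $(i,j)$-entries vanish unless $i$ and $j$ share a block, in which case the constraint $b_{kj}\neq 0 \Rightarrow \mathrm{bl}(k)=\mathrm{bl}(j)=\mathrm{bl}(i)$ forces the summation index $k$ to satisfy $c_{ik}=1$, so that inserting or deleting the factor $c_{ik}$ leaves $\sum_k m_{ik} c_{ik} b_{kj} = c_{ij}\sum_k m_{ik} b_{kj}$ unchanged. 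I would favor the projector formulation for its brevity.
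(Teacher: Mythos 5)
Your proof is correct. The paper's own argument is exactly the ``pedestrian'' entrywise route you sketch in your closing paragraph: it observes that $b_{kj}c_{ij}=b_{kj}c_{ik}$ for all $i,j,k$, since both sides are nonzero only when $i$, $j$, $k$ lie in a common block, and then substitutes $c_{ik}$ for $c_{ij}$ inside the sum $\sum_k m_{ik}b_{kj}c_{ij}$ to pass from $\big((MB)\odot C\big)_{ij}$ to $\big((M\odot C)B\big)_{ij}$. Your primary route packages the same block-locality fact into the projector identities $M\odot C=\sum_\beta J_\beta M J_\beta$ and $J_\beta B=BJ_\beta=J_\beta BJ_\beta$; each of these is correctly justified from the support hypothesis $c_{ij}=0\Rightarrow b_{ij}=0$, and the final computation reducing both sides to $\sum_\beta J_\beta M J_\beta B J_\beta$ is sound. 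The trade-off is presentational: the projector formulation costs a little setup but makes the structural content explicit and coordinate-free (namely that $B$ commutes with every block projector of $C$), while the paper's version is a two-line index manipulation. Nothing is missing from your argument.
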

\begin{proof} Note that $b_{kj} c_{ij} = b_{kj}c_{ik}$ for all $i,j,k=1,...,N$ since the two sides of the equality are non-zero only if $i,j,k$ belong to the same block. Thus,
$$ \big( (M B) \odot C \big)_{ij} = \sum_{k=1}^N m_{ik} b_{kj} c_{ij} = \sum_{k=1}^N m_{ik} b_{kj} c_{ik} =  \big( (M \odot C) B \big)_{ij} $$
for all $i,j=1,...,N$.
\end{proof}

\noindent Let $A = P\Lambda P^\top = Q \Lambda Q^\top$ be two eigendecompositions of $A$ with $\det(P) = \det(Q) = 1$. Without loss of generality, we may assume that equal eigenvalues are ordered consecutively in $\lambda$. By a play on the indices of the sum and product, the joint spectral measure can be written as
$$ \mu \big( \lambda_{\sigma} \big) = \sum_{\tau: \lambda_{\tau} = \lambda_{\sigma}} \epsilon(\tau)  \prod_{j=1}^N p_{j \tau(j)} = \sum_{\tau: \lambda_{\tau} = \lambda} \epsilon(\tau \circ \sigma)  \prod_{j=1}^N p_{j \tau \circ \sigma (j)} = \epsilon(\sigma) \sum_{\tau \in S_N} \epsilon(\tau)  \prod_{j=1}^N p_{\sigma^{-1}(j) \tau(j)} \mathds 1 \{ \lambda_j = \lambda_{\tau(j)} \}, $$
where $\mathds 1 \{ . \}$ is the indicator function. Define the block diagonal matrix $C = \big( \mathds 1 \{ \lambda_{i} = \lambda_j \} \big)_{i,j=1,...,N}$, we thus have
$$\mu (\lambda_\sigma) = \epsilon(\sigma) \det \big( (M_\sigma^\top P) \odot C \big) $$ 
where $M_\sigma$ is the permutation matrix associated to $\sigma$. Because the columns of $P$ and $Q$ are eigenvectors of the symmetric matrix $A$, the $i$-th column of $P$ is orthogonal to the $j$-th column of $Q$ whenever $\lambda_i \neq \lambda_j$. Therefore, $B:=Q^\top P$ and $C$ satisfy the conditions of Lemma \ref{odot}. Applying the lemma to $M = M_\sigma^\top Q$, we get
$$ \det \big( (M_\sigma^\top P) \odot C \big) = \det \big( (M_\sigma^\top Q B) \odot C \big) = \det \Big( \big( (M_\sigma^\top Q) \odot C \big) B \Big) = \det \big( (M_\sigma^\top Q) \odot C \big),  $$
in view of $\det(B) = \det( Q^\top P) = 1$. We conclude that $\mu$ does not depend on the basis matrix used in the eigendecomposition.

\subsection{Multivariate marginal distributions}\label{a:margin}

In this section, we give an expression of the multivariate marginal (quasi) distributions of $\mu$. We will assume for simplicity that $A$ has only simple eigenvalues. 

\begin{proposition} Let $s=\{s_1,...,s_k\}$ and $t=\{t_1,...,t_k\}$ be two subsets of vertices and $\sigma$ a permutation on $S_k$,
$$ \mathbb P \big( X_{s_1} = \lambda_{t_{\sigma(1)}} , ... , X_{s_k} = \lambda_{t_{\sigma(k)}} \big) = \epsilon(\sigma) \det \big( P_{st} \big) \prod_{j=1}^k p_{s_j t_{\sigma(j)}}. $$
\end{proposition}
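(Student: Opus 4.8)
The plan is to start from the definition of $\mu$ in the simple-eigenvalue case, where $X=\lambda_\tau$ carries the quasi-probability $\pi(\tau)=\epsilon(\tau)\prod_i p_{i\tau(i)}$. Since the eigenvalues are distinct, the event $\{X_{s_1}=\lambda_{t_{\sigma(1)}},\dots,X_{s_k}=\lambda_{t_{\sigma(k)}}\}$ occurs exactly for those $\tau\in S_N$ whose restriction to $s$ is the prescribed matching $s_a\mapsto t_{\sigma(a)}$, the restriction to $\bar s=\{1,\dots,N\}\setminus s$ being an arbitrary bijection onto $\bar t$. Hence
$$\mathbb{P}\big(X_{s_1}=\lambda_{t_{\sigma(1)}},\dots,X_{s_k}=\lambda_{t_{\sigma(k)}}\big)=\sum_{\tau:\,\tau(s_a)=t_{\sigma(a)}}\epsilon(\tau)\prod_{i=1}^N p_{i\tau(i)},$$
and the whole proof amounts to evaluating this constrained signed sum.

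Next I would factor the signature along the block decomposition of $\tau$. Writing $\eta:=(-1)^{\sum_{a}s_a+\sum_{b}t_b}$ for the Laplace (interleaving) sign, the standard identity gives $\epsilon(\tau)=\eta\,\epsilon(\mu)\epsilon(\nu)$, where $\mu\in S_k$ and $\nu\in S_{N-k}$ encode $\tau|_s$ and $\tau|_{\bar s}$ read in increasing order of the relevant index sets. The matching on $s$ is frozen by the constraint, so $\epsilon(\mu)$ is a constant which I would express through $\epsilon(\sigma)$ and the two sorting permutations $\phi,\psi$ that put $s$ and $t$ in increasing order, namely $\epsilon(\mu)=\epsilon(\phi)\epsilon(\psi)\epsilon(\sigma)$. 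Pulling out the fixed factor $\prod_a p_{s_a t_{\sigma(a)}}$ and letting $\nu$ range freely, the remaining sum $\sum_\nu\epsilon(\nu)\prod_{i\in\bar s}p_{i\tau(i)}$ collapses to the complementary minor $\det(P_{\bar s\bar t})$ (rows and columns sorted), yielding
$$\mathbb{P}=\eta\,\epsilon(\phi)\epsilon(\psi)\,\epsilon(\sigma)\Big(\prod_a p_{s_a t_{\sigma(a)}}\Big)\det(P_{\bar s\bar t}).$$

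To finish I would invoke Jacobi's identity exactly as in the proof of Proposition \ref{prop:slater}: since $P$ is orthogonal with $\det(P)=1$, one has $\det(P_{\bar s\bar t})=\eta\,\det(P_{st})$ in sorted order, and converting the sorted minor back to the prescribed orders $(s_1,\dots,s_k)$, $(t_1,\dots,t_k)$ reintroduces precisely the factor $\epsilon(\phi)\epsilon(\psi)$. Substituting, every sign appears squared, $\eta^2=\epsilon(\phi)^2=\epsilon(\psi)^2=1$, leaving only $\epsilon(\sigma)\det(P_{st})\prod_a p_{s_a t_{\sigma(a)}}$, which is the claim.

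The main obstacle is purely the sign bookkeeping: one must track the Laplace interleaving sign $\eta$, the two sorting signs $\epsilon(\phi),\epsilon(\psi)$ relating the prescribed orders of $s,t$ to the increasing orders demanded by both the Laplace expansion and Jacobi's identity, and the sign $\epsilon(\sigma)$ of the prescribed matching. The content of the proposition is that these cancel in pairs except for the single surviving $\epsilon(\sigma)$. As a sanity check one recovers Proposition \ref{prop:slater} by summing the identity over $\sigma\in S_k$, since $\sum_\sigma\epsilon(\sigma)\prod_a p_{s_a t_{\sigma(a)}}=\det(P_{st})$ turns the right-hand side into $\det(P_{st})^2$.
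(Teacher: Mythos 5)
Your proof is correct and follows essentially the same route as the paper's: reduce the constrained signed sum over $\tau$ to the fixed factor $\prod_a p_{s_a t_{\sigma(a)}}$ times the complementary minor $\det(P_{\bar s\bar t})$, then convert to $\det(P_{st})$ via Jacobi's identity. Your explicit bookkeeping of the interleaving sign $\eta=(-1)^{\sum_a s_a+\sum_b t_b}$ and the sorting signs is in fact more careful than the paper's, which absorbs everything into the assertion $\epsilon(\sigma_{st})=\epsilon(\sigma)$ together with the unsigned identity $\det(P_{\bar s\bar t})=\det(P_{st})/\det(P)$; each of these omits the factor $\eta$ (e.g.\ for $N=2$, $s=\{1\}$, $t=\{2\}$ one has $\epsilon(\sigma_{st})=-1\neq\epsilon(\sigma)$), and the two omissions cancel exactly as your computation makes explicit.
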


\begin{proof} By definition
$$ \mathbb P \big( X_{s_1} = \lambda_{t_{\sigma(1)}} , ... , X_{s_k} = \lambda_{t_{\sigma(k)}} \big) = \sum_{\tau: \tau(s_j) = t_{\sigma(j)}} \epsilon(\tau) \prod_{j=1}^N p_{j \tau(j)}, $$
where the sum runs over all permutations $\tau$ such that $\tau(s_j) = t_{\sigma(j)}$ for $j=1,...,k$. Let $\overline s = \{\overline s_1,..., \overline s_{N-k} \} = \{1,...,N\} \setminus s$ and define $\overline t$ similarly. Let $\sigma_{st} \in S_N$ be the unique permutation such that $\sigma_{st}(s_j) = t_{\sigma(j)} $ and $\sigma_{st} (\overline s_j) = \overline t_j$ for all $j$. We verify easily that $\epsilon(\sigma_{st}) = \epsilon(\sigma)$. Letting $\tau'= \tau \circ \sigma_{st}$ in the equation above, we obtain
\begin{align*} \mathbb P \big( X_{s_1} = \lambda_{t_{\sigma(1)}} , ... , X_{s_k} = \lambda_{t_{\sigma(k)}} \big) & =  \prod_{j=1}^k p_{s_j t_{\sigma(j)}} \sum_{\tau': \tau'(t_j) = t_{j}} \epsilon(\sigma_{st}) \epsilon(\tau') \prod_{j=1}^{N-k} p_{\overline s_j \tau'(\overline t_j)}\\
& = \epsilon(\sigma)  \prod_{j=1}^k p_{s_j t_{\sigma(j)}} \det \big( P_{\overline s \overline t} \big).
\end{align*}
By Jacobi's Identity (Equation $(11)$ in \cite{jacobi}), $\det \big( P_{\overline s \overline t} \big) = \det \big( P_{s t} \big)/ \det(P)=\det \big( P_{s t} \big)$, which concludes the proof.
\end{proof}

\bibliographystyle{unsrt}
\bibliography{biblio}

\end{document}